\newtheorem{theorem}{Theorem}[]
\newtheorem{proposition}[theorem]{Proposition}
\newtheorem{remark}[theorem]{Remark}
\begin{document}
\title{On one of Birkhoff's theorems for backward limit points} %%%%%%%%%%%%
\author[V. R{\'{y}}{\v{z}}ov{\'{a}}]{Veronika R{\'{y}}{\v{z}}ov{\'{a}}}
\date{}
\address{Mathematical Institute, Silesian University in Opava, Na Rybn{\'{i}}{\v{c}}ku 1, 74601, Opava, The Czech republic}
\email{veronika.ryzova@math.slu.cz}

\let\thefootnote\relax
\footnotetext{MSC2020: Primary 37B20, Secondary 37E05.}

\begin{abstract}
In 1927 George Birkhoff in his book Dynamical Systems presented a~theorem that describes the~behaviour of trajectories outside of a~set of non-wandering points on an~arbitrary compacta. Much later in 1960s Sharkovsky followed up on Birkhoff's work and published even stronger result, this time focusing on the~set of omega limit points for interval maps. In this article we formulate similar statement for a~neighbourhood of a~set of different types of backward limit points for maps of the~interval.
 
\end{abstract}

\bigskip
\maketitle
\section{Preliminaries}

Before we immerse ourselves in theorems and definitions, let us state a few historical remarks. The Birkhoff's conclusion was firstly presented in \cite{bir} for general class of continuous dynamical systems. As such the terms curve of motion, wandering motion or recurrent motion are used frequently throughout the whole section dedicated to this problem. In case of discrete dynamical system they can be understood simply as wandering (resp. recurrent) point. Let us now reveal the theorem in its original form \cite{bir}:

\noindent \textit{Any wandering motion remains outside of a prescribed neighbourhood of} $M_1$ (i.e. the set of non-wandering points) \textit{only a finite time T, and goes out of this neighbourhood only a finite number of times N, where N and T are uniformly limited, once the neighbourhood is chosen.} 

Our main result proofs the reformulation of this theorem for non-invertible discrete dynamical systems and sets of backward limit points.

\subsection{Basic Definitions}
We define a~discrete \textit{dynamical system} as an~ordered pair $\left(X, f\right)$, where $X$ is a~compact metric space and $f: X \rightarrow X$ is a~continuous map. We say that a~set $A \subset X$ is \textit{invariant} if $f\left(A\right) \subset A$, in case of equality we use the~term \textit{strongly invariant}. The dynamical system $\left(X, f\right)$ is called \textit{minimal} if and only if it does not contain any non-empty, proper, closed invariant subset.

For any positive integer $n$ and a~point $x \in X$ we denote by $f^n\left(x\right)$ the~$n$\textit{-th iteration} of the~map $f$, which is to be understood as follows $f^0 \left(x\right) = x$ and $f^n\left(x\right) = f\left(f^{n-1}\left(x\right)\right)$. A~sequence $\left\{x_n\right\}_{n = 0}^{\infty}$ obeying the~rule $f^n\left(x\right) = x_n$ is called the~\textit{trajectory of a~point} $x$. The~notation $Orb_f\left(x\right) = \left\{f^n\left(x\right): n\geq 0\right\}$ is used for a~subset of $X$ known as a~\textit{forward orbit of a~point} $x.$ 

Throughout the~article we are examining backward dynamics of our system, some of the~notions already defined have their backward counterparts. Whenever such situation arises and such concept will be relevant to our problem we will provide the~interpretation of the~term in a~point of view of backward dynamics as well. Switching to backward notions is fairly transparent in case of  a~\textit{backward orbit of a~point} $x$, we simply put $Orb_{f}^{-}\left(x\right) = \left\{f^{-n}\left(x\right): n\geq 0\right\}.$ 

However some difficulties arises during the~construction of a~backward trajectory of a~point $x.$ These issues are addressed by forming two different types of sequences. The~first one is called a~\textit{preimage sequence of a~point} $x$. It is a~sequence $\left\{x_n\right\}_{n = 0}^{\infty}$ such that $f^n\left(x_n\right) = x,$ as you can see there is no prescribed relation between two consecutive members. The~backward dynamics of a~point can be very rich. This phenomenon manifests itself by the~existence of many preimage sequences. To bring some order into their distinction a~\textit{backward orbit branch of a~point} $x$ was defined. It is once more a~sequence $\left\{x_n\right\}_{n = 0}^{\infty}$, additionally as opposed to the~preimage sequence there is a~condition for the~adjacent members of the~sequence, namely $f\left(x_n\right) = x_{n-1}.$

Forward trajectory of a~point $x$ can display various patterns, some of the~most well known are: \textit{fixed point} (a~point $x$ is a~fixed point if $f^n\left(x\right) = x$ for every $n \in \mathbb{N}),$ \textit{periodic point} ($x$ is a~periodic point if there exists an~integer $k$ such that $f^k\left(x\right) = x$ and for every $n < k$ holds $f^n\left(x\right) \neq x,$ said $k$ is then called the~\textit{smallest period} of $x$), \textit{recurrent point} (we call $x$ recurrent if for every neighbourhood $U$ of $x$ there exists a~positive integer $n$ such that $f^n\left(x\right) \in U$) and \textit{non-wandering point} ($x$ is non-wandering if for every neighbourhood $U$ of $x$ there exists a~positive integer $n$ such that $f^n\left(U\right) \cap U \neq \emptyset$). Sets of these points will be denoted by: $Fix\left(f\right),\,Per\left(f\right),\,Rec\left(f\right)$ and $NW\left(f\right)$, respectfully.

Trajectory of every point contains in itself some information about the~dynamical properties of the~system, to analyse the long term behaviour of the~system further one can examine their limit sets. 
\smallskip
\subsection{Forward and Backward Limit Sets}
The~set of limit points of forward trajectory of a~point $x$, widely known as an~$\omega-$limit set of $x$ (or $\omega\left(x\right)$ for short), is a~topic studied in detail by many authors. Every such set is non-empty, compact and strongly invariant. It is known whether given a~closed invariant subset of $X$ is an~$\omega$-limit set of some point for continuous map of the~interval, the~criterion can be found in \cite{Blokh1996THESO}. 

From the plethora of results concerning $\omega\left(x\right),$ we point out a few widely known ones for selfmap of the interval. For topological characterisation of $\omega\left(x\right)$ one can take a~look into the~following articles \cite{ABCP} or \cite{Shar}, in the~latter Sharkovsky constructed the~decomposition of the~set $\bigcup\limits_{x \in X}\omega\left(x\right) = \omega\left(f\right)$ into sets of first type (lately known as solenoidal set), second type (also known as basic set) and the~cycle (or periodic orbit), proving that each $\omega\left(x\right)$ of a~continuous map of the~interval is contained in maximal one, denoted $\tilde{\omega}.$ Since then other approaches to the~decomposition of limit sets of piecewise monotone continuous maps were published, for example \cite{nitecki1982topological} or \cite{Blokh_1995}, where the~method is presented together with its correspondence to Sharkovsky's results. 

The notion of backward limit set was defined as a~dual concept to the~$\omega$-limit set and as such the~task of finding the~backward limit set for homeomorphisms can be simplified to finding $\omega$-limit set for $f^{-1}.$ Furthermore in case of invertible maps the terms preimage sequence and backward orbit branch are one and the~same, meaning that their limit sets also coincides. Therefore for our purposes noninvertible maps are much more significant. While looking for backward limit set of noninvertible map we can alternate between computing the~limit points of preimage sequences as in \cite{cv} to checking limit points of every backward orbit branch \cite{hero, zakl} or we can even take into consideration only one backward orbit branch \cite{ba, zaklzpv}. 

The~set consisting of limit points of all preimage sequences of a~point $x$ is called an~$\alpha$\textit{-limit set} abbreviated by $\alpha\left(x\right)$. If we compute the~limit points of all backward orbit branches of a~point $x$, we found so call \textit{special} $\alpha$\textit{-limit set} (or $s\alpha\left(x\right)$ for short).  Lastly we define an~\textit{$\alpha$-limit set of a~backward orbit branch $\left\{x_j\right\}_{j \leq 0}$.} It consists of all points $y$ for which there exists a~strictly decreasing sequence of negative integers $\left\{j_i\right\}_{i \geq 0}$ such that $x_{j_{i}} \rightarrow y$ as $i \rightarrow \infty$, denoted by $\alpha\left(\left\{x_j\right\}_{j \leq 0}\right).$ The $s\alpha\left(x\right)$ could be described in another way by using the $\alpha$-limit set of a~backward orbit branch. It is, quite simply, the union of $\alpha\left(\left\{x_j\right\}_{j \leq 0}\right)$ over all backward orbit branches of the point $x.$

In a light of the discussion above, it is clear that for every fixed backward orbit branch $\left\{x_j\right\}_{j \leq 0}$ the following chain of inclusions holds  $\alpha\left(\left\{x_j\right\}_{j \leq 0}\right) \subset s\alpha\left(x\right) \subset \alpha\left(x\right).$ Moreover the~relation between $\bigcup_{x \in X} s\alpha\left(x\right) = SA\left(f\right)$ and $Rec\left(f\right)$ was discovered in \cite{salphaC}, and from \cite{cv} we know that the~set $\bigcup_{x \in X} \alpha\left(x\right) = A\left(f\right)$ includes every non-wandering point (the~other inclusion does not hold, see an example below).

 Let us consider the~map $f: \left[0, 1\right] \rightarrow \left[0, 1\right]$ as depicted below. The backward trajectory of $x=\frac{1}{4}$ consists only of one backward orbit branch $\left\{4^{-\left(n+2\right)}\right\}_{n=0}^{\infty}.$ Due to the~uncomplicated behaviour of this sequence only one point $0$ belongs to $\alpha\left(x\right).$ However more important is the~observation, that  $x = \frac{1}{4}$ is a~wandering point that does not belong to its own $\alpha\left(x\right).$ Moreover if we study $\alpha\left(1\right)$ we will come to the~conclusion that our point $x$ is undoubtedly its part. From this it is easy to see that $NW\left(f\right) \not\supset A\left(f\right).$

\begin{figure}[h]
    \centering{
    \includegraphics[width=0.3\textwidth]{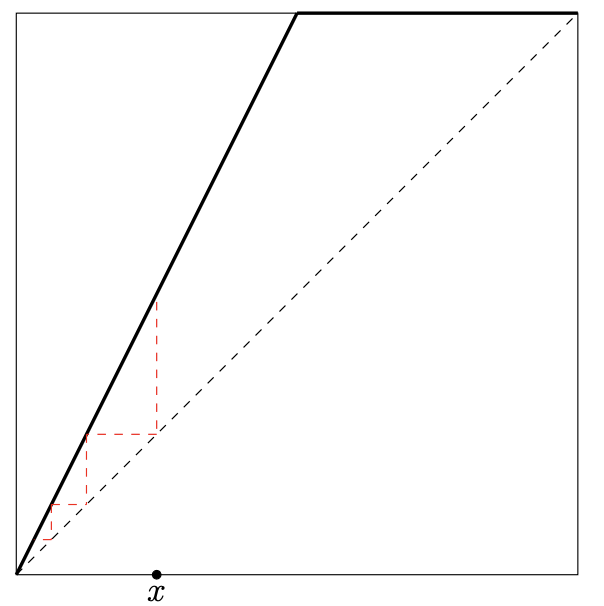}
    \centering{\caption{Graph of the~map $f: \left[0, 1\right] \rightarrow \left[0, 1\right].$}}}
      
\end{figure}

The sets defined above are in general settings of compact metric space related in the following fashion:  

$$Fix\left(f\right) \subset Per\left(f\right)  \subset Rec\left(f\right) \subset SA\left(f\right) \subset \overline{Rec\left(f\right)} \subset \omega\left(f\right) \subset NW\left(f\right) \subset A\left(f\right)$$

For the rest of the article we will restrict ourselves to the selfmap of the interval, this assumption significantly simplifies the summary above, because $\overline{Per\left(f\right)} = \overline{Rec\left(f\right)}.$

\section{Known Results}

It was already mentioned that each $\omega\left(x\right)$ for continuous map of the~interval is contained in a~maximal one, this is not true for $s\alpha\left(x\right)$ (a~counterexample was constructed in \cite{zakl}), however there is a~way to connect these notions. Let us first recall the~types of $\tilde{\omega}$ together with their properties. We will start with a~periodic orbit, this $\tilde{\omega}$ has the~least complicated structure. It is a~finite set whose elements map onto each other. The~following theorem can be found in \cite{KMS}, interestingly enough it holds for all periodic orbits, even for those that are not maximal.

\begin{theorem} \cite{KMS}
Let $P$ be a periodic orbit for the continuous interval map. If $\alpha\left(x\right) \cap P \not = \emptyset,$ then $s\alpha\left(x\right) \supset P.$
\end{theorem}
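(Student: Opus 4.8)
The plan is to reduce the inclusion $s\alpha(x)\supseteq P$ to producing a \emph{single} backward orbit branch of $x$ that accumulates at one point of $P$, and then to build such a branch by an intermediate value / covering argument anchored at the periodicity of $P$. The first ingredient is a propagation observation that does the bookkeeping. Write $P=\{p_0,p_1,\ldots,p_{k-1}\}$ with $f(p_j)=p_{j+1\bmod k}$, and suppose we have produced one backward orbit branch $\{w_n\}_{n\ge 0}$ of $x$ (so $w_0=x$ and $f(w_n)=w_{n-1}$) together with indices $a_i\to\infty$ such that $w_{a_i}\to p_0\in\alpha(x)\cap P$. Applying $f$ along the branch gives $w_{a_i-m}=f^m(w_{a_i})\to f^m(p_0)=p_m$ for each fixed $m$, and since $a_i-m\to\infty$ this places every $p_m$ in $\alpha(\{w_n\}_{n\ge 0})\subseteq s\alpha(x)$. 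Hence it suffices to exhibit one backward orbit branch of $x$ accumulating at $p_0$.

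To manufacture such a branch I would pass to $g:=f^k$, for which $p_0$ is a fixed point. Unwinding $p_0\in\alpha(x)$, there are preimages $y_i\to p_0$ with $f^{n_i}(y_i)=x$ and $n_i\to\infty$; after a subsequence $n_i\equiv r\pmod k$, and putting $\tilde x:=f^{(k-r)\bmod k}(x)$ one gets $g^{M_i}(y_i)=\tilde x$ with $y_i\to p_0$, i.e. $p_0$ lies in the $\alpha$-limit set of $\tilde x$ computed for $g$, abbreviated $\alpha_g(\tilde x)$. The heart of the proof is then the fixed-point statement: if $p_0$ is fixed for the continuous interval map $g$ and $p_0\in\alpha_g(\tilde x)$, then some $g$-backward orbit branch of $\tilde x$ accumulates at $p_0$. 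Interpolating each $g$-step by its intermediate $f$-preimages, and choosing the very first step to run backwards along $y_{i_1}\to\cdots\to x\to\cdots\to\tilde x$ (which passes through $x$ after $(k-r)\bmod k$ backward steps), converts this into an $f$-backward orbit branch of $x$ with the same accumulation at $p_0$.

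For the fixed-point statement the engine is a self-covering produced by the intermediate value theorem. Let $K$ be the closed interval with endpoints $p_0$ and $\tilde x$, and let $J_i$ be the closed interval with endpoints $p_0$ and $y_i$, so that $J_i$ shrinks to $\{p_0\}$. Since $g^{M_i}(p_0)=p_0$ and $g^{M_i}(y_i)=\tilde x$, continuity forces $g^{M_i}(J_i)\supseteq K$, while $J_i\subseteq K$ as soon as $y_i$ lies between $p_0$ and $\tilde x$. Given a target $t\in K$ one may therefore pick a preimage $z\in J_i$ with $g^{M_i}(z)=t$; as $i\to\infty$ this $z$ is as close to $p_0$ as we like and, crucially, remains in $K$, so it can serve as the target of the next step. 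Chaining these finite backward segments, increasing the scale $i$ at each stage, glues them into one infinite $g$-backward orbit branch whose designated points lie in ever smaller $J_i$ and hence converge to $p_0$, as required.

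The main obstacle is precisely this stitching of infinitely many finite backward segments into a single coherent branch that returns to every neighbourhood of $p_0$: the delicate point is to guarantee that at each stage the current target still lies in the image of an arbitrarily small interval about $p_0$, so that the construction never stalls. This is where the hypotheses are spent. Periodicity supplies $g^{M_i}(p_0)=p_0$ and hence the self-covering $J_i\subseteq K\subseteq g^{M_i}(J_i)$, so that as long as the targets stay in $K$ they never escape the covered interval; the genuinely awkward case is when the available preimages $y_i$ approach $p_0$ only from the side opposite to $\tilde x$, so that a naive landing point leaves $K$. I expect this to be the crux, to be resolved by exploiting the full image interval $g^{M_i}(J_i)$ (which for high iterates of the fixed point $p_0$ tends to straddle $p_0$ on both sides) rather than just the one-sided segment $K$, thereby keeping each successive target inside the covered region.
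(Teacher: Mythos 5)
The paper does not actually prove this theorem (it is quoted from \cite{KMS} without proof), so your argument must stand on its own merits; it has a genuine gap, located exactly at the point you yourself flag as the crux. Your reductions are fine: pushing a single accumulating branch forward through $f$ to capture all of $P$, passing to $g=f^{k}$ and $\tilde x$, routing the first backward segment through $x$ by taking the first designated preimage to be $y_{i_1}$ itself, and the chaining in the same-side case (where the $y_i$ approach $p_0$ from the side of $\tilde x$, so $J_i\subset K$ and every new target stays inside the covered set $K$) is a complete argument. But in the opposite-side case your proposed repair --- that $g^{M_i}\left(J_i\right)$ ``tends to straddle $p_0$ on both sides'' --- is false, and with it the construction stalls after the first step. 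Concretely, take $p_0=\tfrac12$, $\tilde x=0$, and $g$ piecewise linear with $g\left(t\right)=\max\left(0,2t-\tfrac12\right)$ on $\left[0,\tfrac12\right]$, $g\left(t\right)=\tfrac32-2t$ on $\left[\tfrac12,\tfrac34\right]$, $g\left(t\right)=4t-3$ on $\left[\tfrac34,1\right]$. Preimages of $\tilde x$ of arbitrarily high order accumulate at $p_0$ from the right, yet for $J_i=\left[\tfrac12,y_i\right]$ every iterate $g^{m}\left(J_i\right)$ is an interval whose right endpoint is exactly $\tfrac12$: the images never straddle $p_0$. Worse, every point slightly to the right of $p_0$ has all of its preimages near $\tfrac78$, and any backward branch entering the right side of $p_0$ is swept off to the fixed point $1$; so once your first step lands at $z_1\in J_{i_1}$, to the right of $p_0$, no later step can return near $p_0$. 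In this example the theorem survives only because branches can accumulate at $p_0$ from the left, which your scheme, locked onto the right-hand intervals $J_i$, never finds.

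What is missing is a dichotomy that decides which side the branch should actually use, and that is where the real work lies. Follow the forward chains $y_i,g\left(y_i\right),\ldots,g^{M_i}\left(y_i\right)=\tilde x$ and let $s_i$ be the first time the chain reaches the other (weak) side of $p_0$. Either (a) along a subsequence the pre-crossing points $g^{s_i-1}\left(y_i\right)$ tend to $p_0$ (this happens when the $s_i$ are bounded, or when the pre-crossing excursions have vanishing size); then the crossing points $b_i=g^{s_i}\left(y_i\right)\to p_0$ are high-order preimages of $\tilde x$ approaching $p_0$ from the side of $\tilde x$ (unless $\tilde x=p_0$, a degenerate case your proposal also does not cover, since then $K$ is a single point), and you are back in the same-side case on the other side; or (b) the pre-crossing excursions have size bounded below by some $\rho>0$; then, taking $c$ to be the last chain point in $\left(p_0,p_0+\delta\right]$ before the chain first exceeds $p_0+\rho/2$, one gets $g^{N}\left(\left[p_0,c\right]\right)\supseteq\left[p_0,p_0+\rho/2\right]$ for some $N\geq 1$, with $c$ within $\delta$ of $p_0$. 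This is a covering of a \emph{fixed} interval by arbitrarily small intervals at $p_0$, and it is what makes a scale-descending chaining possible: every target in $\left[p_0,p_0+\rho/2\right]$ acquires a preimage within $\delta$ of $p_0$, for every $\delta$. Your inclusion $g^{M_i}\left(J_i\right)\supseteq K$ cannot play this role, because the covered set $K$ lies on the wrong side of $p_0$ relative to the small intervals $J_i$. So your overall strategy is salvageable, but the opposite-side case requires this crossing/excursion analysis (or the machinery of \cite{KMS}), not the straddling property you conjecture.
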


The~other two types of $\tilde{\omega}$ are infinite sets. First, we will concentrate on a~basic set. This kind of maximal omega limit set is contained in a~minimal cycle of intervals, it always includes a~periodic point and the~existence of basic set is equivalent with positive topological entropy \cite{tentri}. The relation of $s\alpha\left(x\right)$ and basic set (below) first appeared in \cite{zakl}. 

\begin{theorem}\cite{zakl}
\label{basic} For every basic set $\omega$ there exists $x \in \omega$ such that $s\alpha\left(x\right) \supset \omega.$
 \end{theorem}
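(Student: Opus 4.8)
The plan is to exploit the three defining features of a basic set $\omega$: it is strongly invariant, so $f(\omega)=\omega$; the restriction $f|_{\omega}$ is known to be topologically transitive; and $\omega$ is an infinite set in which the periodic points are dense. The first feature is what makes the backward dynamics tractable here. Since $f(\omega)=\omega$, every point of $\omega$ has at least one $f$-preimage lying in $\omega$, so one can form backward orbit branches $\{x_j\}_{j\le 0}$ that never leave $\omega$; and because $\omega$ is closed, the $\alpha$-limit set of any such branch is automatically contained in $\omega$. I would first reduce the theorem to a single-branch statement: it suffices to produce a point $x\in\omega$ together with one backward orbit branch $\{x_j\}_{j\le 0}\subset\omega$ with $x_0=x$ whose underlying sequence is dense in $\omega$. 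Indeed $\alpha(\{x_j\}_{j\le 0})$ is closed and, being dense in the compact set $\omega$, must equal $\omega$; since $\alpha(\{x_j\}_{j\le 0})\subset s\alpha(x)$ by the chain of inclusions recalled in the Preliminaries, this yields $s\alpha(x)\supset\omega$.

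To build such a dense branch I would use a diagonal construction. Fix a countable dense subset $\{y_1,y_2,\dots\}$ of $\omega$, conveniently taken to consist of periodic points. Starting from any $x\in\omega$, extend the branch into the past in finitely many blocks, the $k$-th block steering the current left endpoint to within distance $1/k$ of some $y_i$, cycling through the indices $i$ so that every $y_i$ is approached arbitrarily closely infinitely often; the resulting branch then accumulates at each $y_i$ and hence is dense in $\omega$. The combinatorial engine behind each block is the following reachability claim: for every $a\in\omega$, every $y\in\omega$ and every $\varepsilon>0$ there is a finite backward chain $a=z_0,z_1,\dots,z_m$ in $\omega$ with $f(z_l)=z_{l-1}$ and $d(z_m,y)<\varepsilon$; equivalently, the backward orbit inside $\omega$ of every point is dense in $\omega$. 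Granting this, the inductive extension is routine, and concatenating the blocks produces the branch required by the reduction.

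The main obstacle is exactly this reachability claim, that is, upgrading topological transitivity of $f|_{\omega}$ (which a~priori only says that the forward, or backward, orbit of a~nonempty relatively open set is dense) to the statement that the backward orbit of an~\emph{individual} point is dense. Here the positive topological entropy of a~basic set, recalled above, is decisive: it forces $f|_{\omega}$ to be highly non-invertible, so preimages proliferate, and after passing to a~suitable iterate one can code $\omega$ by a~Markov/subshift structure in which extending a~sequence to the left corresponds to choosing a~backward branch. A~left-infinite itinerary in which every admissible finite word occurs then projects to a~dense backward branch, simultaneously establishing the reachability claim and directly delivering the branch of the previous paragraph. Choosing the targets $y_i$ to be periodic points is a~further simplification, since such a~point lies on its own backward orbit and is captured in the spirit of the periodic-orbit result \cite{KMS}. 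The remaining points are bookkeeping: that the chains can always be kept inside $\omega$, which is precisely strong invariance $f(\omega)=\omega$, and that the finitely many turning points of $f$ do not obstruct continuous selection of preimages, which is arranged by choosing the $y_i$ away from the countable set of critical values.
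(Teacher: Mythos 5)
The paper itself states this theorem without proof (it is quoted from \cite{zakl}), so your proposal can only be judged on its own merits. Your overall architecture is reasonable and is the natural route: reduce to constructing, for some $x \in \omega$, a single backward orbit branch $\left\{x_j\right\}_{j \leq 0}$ that comes arbitrarily close, infinitely often, to each member of a countable dense subset of $\omega$; since $\alpha\left(\left\{x_j\right\}_{j \leq 0}\right)$ is closed and contained in $s\alpha\left(x\right)$, this gives $s\alpha\left(x\right) \supset \omega$. The block/diagonal construction on top of a ``reachability'' lemma is fine.

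The genuine gap is that the reachability lemma is the entire content of the theorem, and your justification of it does not work. Positive topological entropy yields (by Misiurewicz) a horseshoe for some iterate of $f$, which codes a Cantor \emph{subset} of the dynamics by a full shift; it does not produce a Markov/subshift coding of the whole basic set $\omega$ in which left-extension of itineraries corresponds to choosing backward branches, and no such coding exists in general, so the assertion that ``a left-infinite itinerary in which every admissible finite word occurs projects to a dense backward branch'' is unsupported. The ingredient that actually proves reachability is the defining expansion property of basic sets from the spectral decomposition (Blokh, cf. \cite{Blokh_1995}): $\omega$ lies in a cycle of intervals $K = Orb\left(I\right)$, and for every open interval $U$ meeting $\omega$ the images $f^n\left(U \cap K\right)$ eventually cover all of $K$ up to at most finitely many points. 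Given $a \in \omega \subset K$ and a ball $U$ around $y \in \omega$, this yields $m$ and $z \in U \cap K$ with $f^m\left(z\right) = a$, i.e. exactly the finite backward chain you need. Note, however, that these chains live in $K$, not in $\omega$, which exposes a second flaw in your write-up: your insistence that the branch stay inside $\omega$ is both unjustified (density of the backward orbit of an \emph{individual} point within $\omega$ does not follow from transitivity of $f|_{\omega}$, and the covering property only supplies preimages in $K$) and unnecessary, since the theorem asks only for the inclusion $s\alpha\left(x\right) \supset \omega$, not equality. Dropping that requirement and running your block construction with chains in $K$ repairs the argument. A minor further point: your appeal to ``the finitely many turning points'' presupposes piecewise monotonicity, which is not assumed for a continuous interval map; fortunately no continuity of preimage selection is needed anywhere in the construction.
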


Lastly let us focus on the~final type of maximal omega limit set, so called solenoidal set. Similarly as basic sets, solenoidal sets are contained in a~nested sequence of cycles intervals with periods tending to infinity, however as oppose to the~basic sets, they do not include any periodic point. At least two different ways of describing these sets can be found. According to \cite{solenoid} every solenoidal set can be written as a~union of two sets $\tilde{\omega} = P \cup Q,$ where $Q$ is a~Cantor set and $P$ is either empty or countable set of isolated points disjoint with $Q.$ Blokh in \cite{Blokh1} defines strongly solenoidal set as an~invariant subset $\Omega \subset \bigcap_{n \geq 0}M_n,$ where  $\left\{M_n\right\}_{n=0}^{\infty}$ is a~family of invariant compacta. He follows the~definition with a~theorem connecting this notion with maximal omega limit sets.

\begin{theorem}\cite{Blokh1}
Let $S = \bigcap_{n\geq0} M_n$ be a~strongly solenoidal set. There exists a~minimal set $\Omega^{\prime} \subset S,$ a maximal (with respect to inclusion) set $\omega\left(z\right) = \Omega^{\prime\prime} \subset S$ among the~$\omega$-limit sets, and a~set $\Omega^{\prime\prime\prime} = S \cap \Omega\left(f\right)$ such that:
\begin{enumerate}
\item for each $y \in S$ we have $\omega \left(y\right) = \Omega^{\prime} = S \cap \overline{Per\left(f\right)},\, \Omega^{\prime\prime\prime}\setminus \Omega^{\prime}$ consists of isolated points, and $\Omega^{\prime\prime}\setminus\Omega^{\prime}$ is either empty or countable
\item for each $y^{\prime}$ $\omega\left(y^{\prime}\right) \cap S \not = \emptyset$ implies $\Omega^{\prime} \subset \omega\left(y^{\prime}\right) \subset \Omega^{\prime\prime} \subset \Omega^{\prime\prime\prime}.$
\end{enumerate}
\end{theorem}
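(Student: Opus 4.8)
The plan is to read off everything from the defining nested structure of $S$ and to reduce the dynamics on $S$ to that of an odometer. By definition $S=\bigcap_{n\geq 0}M_n$, where each $M_n$ is a cycle of $p_n$ closed intervals cyclically permuted by $f$, with $M_{n+1}\subset M_n$, $p_n\mid p_{n+1}$ and $p_n\to\infty$. First I would fix, for every $x\in S$ and every level $n$, the unique component $I_n(x)$ of $M_n$ containing $x$ and record its index $\mathrm{ind}_n(x)\in\mathbb{Z}/p_n\mathbb{Z}$. The assignment $x\mapsto\pi(x)=(\mathrm{ind}_n(x))_{n\geq 0}$ gives a continuous surjection $\pi\colon S\to H$ onto the odometer $H=\varprojlim\mathbb{Z}/p_n\mathbb{Z}$ that intertwines $f|_S$ with the $+1$ map of $H$. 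Since $H$ is minimal, every $f$-orbit inside $S$ projects onto a dense orbit of $H$; this single fact drives most of the inclusions below.

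Next I would set $\Omega':=S\cap\overline{Per(f)}$ and show it is the unique minimal set inside $S$. The key step is that any nonempty closed invariant $B\subset S$ has $\pi(B)$ closed and invariant in $H$, hence $\pi(B)=H$ by minimality, so $B$ meets every component of every $M_n$; together with the fact that the periodic orbits shadowing $S$ accumulate densely on the core Cantor set $Q$ of the decomposition $\tilde\omega=P\cup Q$, this pins down $\Omega'=Q$ as minimal and forces $B\supseteq\Omega'$. The same projection argument gives $\omega(y)\supseteq\Omega'$ for every $y\in S$, since the orbit of $y$ revisits each component infinitely often and accumulates there on $\Omega'$. Proving the reverse inclusion $\omega(y)\subseteq\Omega'$ for $y\in S$ is the delicate half of part (1): one must rule out limit points off the core, and this is exactly where the one-dimensional structure enters, as the nested components shrink around $Q$ and the monotonicity of $f$ on the relevant laps forbids extra accumulation.

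For the maximal omega-limit set I would note that the omega-limit sets contained in $S$ are ordered through the level structure, so a maximal one $\Omega''=\omega(z)$ exists, with $z$ chosen approaching $S$ from outside so as to sweep the extra accumulation points; thus $\Omega''=P\cup Q$ and $\Omega''\setminus\Omega'=P$. Countability of $\Omega''\setminus\Omega'$ and the isolatedness of $\Omega'''\setminus\Omega'$, where $\Omega'''=S\cap\Omega(f)$ records the non-wandering points of $S$, then follow from a gap-counting argument: the Cantor set $\Omega'$ has only countably many complementary intervals, and each contributes at most finitely many points of $\Omega'''$ off the core, none of which is a limit of the others. This bookkeeping is the second place where care is needed, since one must verify that a non-wandering point of $S$ lying outside $Q$ is genuinely isolated in $\Omega'''$.

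Finally, part (2) assembles these pieces. If $\omega(y')\cap S\neq\emptyset$, pick $w$ in the intersection; since $\omega(y')$ is closed and invariant it contains $\overline{Orb_f(w)}$, hence $\omega(w)$, and part (1) gives $\omega(w)=\Omega'$, so $\Omega'\subset\omega(y')$. For the upper bounds I would use that each $M_n$ is a trapping cycle of intervals, so an orbit accumulating on $M_n$ eventually enters and remains in it; then $\omega(y')\subset M_n$ for every $n$, whence $\omega(y')\subset\bigcap_n M_n=S$. Consequently $\omega(y')\subset S\cap\Omega(f)=\Omega'''$, because omega-limit sets lie in the non-wandering set, and $\omega(y')\subset\Omega''$ by maximality of $\Omega''$ among the omega-limit sets contained in $S$. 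I expect the reverse inclusion of step two, together with the isolatedness bookkeeping of step three, to be the main obstacle: both require quantitative control of how orbits thread the shrinking nested intervals, rather than the soft odometer argument that settles the remaining inclusions.
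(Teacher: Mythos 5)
This statement appears in the paper only as a quoted result from \cite{Blokh1}, with no proof given, so there is no in-paper argument to compare against; your proposal must stand on its own, and it does not yet. The soft parts are fine: the odometer factor $\pi\colon S\to H$, the consequence that every nonempty closed invariant $B\subset S$ meets every component of every $M_n$, and the lower bound in part (2) via $w\in\omega(y')\cap S$ giving $\omega(w)\subset\omega(y')$. But the steps you yourself flag as ``delicate'' are exactly the content of Blokh's theorem, and your sketches for them fail as stated. For $\omega(y)\subset\Omega'$ when $y\in S$ you invoke ``monotonicity of $f$ on the relevant laps'' and components ``shrinking around $Q$''; neither is available: $f$ is merely continuous (there are no laps), and for $C^0$ maps the components of $\bigcap_{n} M_n$ need not shrink to points --- $S$ may contain nondegenerate wandering intervals, which is precisely why the fibers of $\pi$ can be intervals and why this half of part (1) needs a real argument rather than a geometric picture that is false in general.

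More seriously, the last step of part (2) conflates ``maximal'' with ``maximum'': from the existence of an $\omega$-limit set $\Omega''\subset S$ maximal with respect to inclusion you cannot conclude $\omega(y')\subset\Omega''$ for every $y'$ with $\omega(y')\cap S\neq\emptyset$; maximality only says that no $\omega$-limit set in $S$ strictly contains $\Omega''$, not that $\Omega''$ contains all others. That containment is the substance of the statement and requires identifying $\Omega''=Q\cup P$ and controlling all orbits whose limit sets meet $S$; likewise, choosing $z$ ``approaching $S$ from outside so as to sweep the extra accumulation points'' is not a construction of a point with $\omega(z)=\Omega''$. Finally, your trapping argument for $\omega(y')\subset\bigcap_n M_n$ skips the case where the orbit of $y'$ accumulates on $M_n$ only at endpoints of its components, from outside, without ever entering $M_n$; this case can be excluded (such an endpoint would have to be periodic of period at most $2p_n$, which is incompatible with its lying in $M_m$ for all $m$ with $p_m>2p_n$), but the exclusion must be carried out. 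As it stands, the proposal is a correct road map of what has to be proved, with the hard steps either missing or resting on assumptions that fail for general continuous interval maps.
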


To summarise the~solenoidal set is a~type of maximal omega limit set, that either consists of only minimal set, i.e. $\tilde{\omega} = \Omega^{\prime} = Q$ (set $P$ is empty) or in a~very specific cases we have to add at most countably many isolated points, such that each interval contiguous to $Q$ contains at most two points of $P$, i.e.  $\tilde{\omega} = \Omega^{\prime\prime} = Q \cup P.$ The relation between a~solenoidal set and a~backward limit set was published in \cite{zakl}.

\begin{theorem}\cite{zakl}
Let $Orb\left(I_0\right) \supset Orb\left(I_1\right)\supset \ldots$ be a~nested sequence of cycles of intervals for the~continuous interval map $f$ with periods tending to infinity. Let $Q = \bigcap Orb\left(I_n\right)$ and $S = Q \cap Rec\left(f\right).$
\begin{enumerate}
\item If $ \alpha\left(x\right) \cap Q \not = \emptyset,$ then $x \in Q.$
\item If $x \in Q,$ then $s\alpha\left(x\right) \supset S$ and $s\alpha\left(x\right) \cap Q= S.$
\end{enumerate}
\end{theorem}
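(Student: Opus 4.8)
The plan is to treat the two assertions separately, relying throughout on the strong invariance $f(Orb(I_n)) = Orb(I_n)$ of each cycle of intervals and on the fact that $Q = \bigcap_n Orb(I_n)$ contains no periodic point: a point of period $p$ lying in $Orb(I_n)$ forces the cycle period $p_n$ to divide $p$, which is impossible for every $n$ once $p_n \to \infty$.

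For (1) I would argue by contraposition inside a single level. Fix $y \in \alpha(x) \cap Q$ and a preimage sequence $x_{k_i} \to y$ with $f^{k_i}(x_{k_i}) = x$. Suppose $x \notin Q$; then $x \notin Orb(I_N)$ for some $N$. Since $f^{k}(Orb(I_N)) = Orb(I_N)$, no iterated preimage of $x$ can meet $Orb(I_N)$, so $f^{j}(x_{k_i}) \notin Orb(I_N)$ for every fixed $j$ and all large $i$ (those with $k_i \ge j$). Letting $i \to \infty$ gives $f^j(y) \in \overline{[0,1]\setminus Orb(I_N)} \cap Orb(I_N) = \partial Orb(I_N)$ for every $j \ge 0$, because $f^j(y) \in Q \subseteq Orb(I_N)$. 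As $\partial Orb(I_N)$ is finite, the forward orbit of $y$ is finite, hence $y$ is eventually periodic and its cycle lies in $Q$ — contradicting the absence of periodic points in $Q$. Therefore $x \in Orb(I_n)$ for all $n$, i.e. $x \in Q$.

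For (2) the two halves are $s\alpha(x) \supseteq S$ and $s\alpha(x) \cap Q \subseteq S$; I would obtain them from one construction plus one structural input. Using $f(Q) = Q$ (valid since the $Orb(I_n)$ are nested compacta), every point of $Q$ has a $Q$-preimage, so $x$ admits a backward orbit branch $\{x_{-k}\}$ lying entirely in $Q$. The crucial combinatorial observation is that within $Orb(I_n)$ the $n$-th address is rigid: $f(I_n^{(k)}) = I_n^{(k+1)}$ forces the address of $x_{-k}$ to equal $a_n - k \pmod{p_n}$, independently of the choices made, where $a_n$ is the address of $x$. Consequently, for any target $s \in S$ with $n$-th address $b_n$, the branch sits in the same level-$n$ interval $I_n^{(b_n)}$ as $s$ at every backward time $k \equiv a_n - b_n \pmod{p_n}$, so $x_{-k}$ approximates $s$ to within $\operatorname{diam} I_n^{(b_n)}$; choosing such times with $n \to \infty$ yields $x_{-k} \to s$ and hence $S \subseteq \alpha(\{x_{-k}\}) \subseteq s\alpha(x)$. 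For the reverse containment I would invoke the inclusion $s\alpha(x) \subseteq SA(f) \subseteq \overline{Rec(f)}$ from the chain in the Preliminaries together with $\overline{Rec(f)} \cap Q = S$, which gives $s\alpha(x) \cap Q \subseteq S$.

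The hard part will be the geometric input hidden in the approximation step and in the reverse containment: one must know that the recurrent points of $Q$ are exactly the minimal Cantor set $S$ and that each $s \in S$ lies in level-$n$ fibers of diameter tending to $0$. If some fiber $\bigcap_n I_n^{(b_n)}$ were a nondegenerate interval the convergence $x_{-k} \to s$ would fail, so I must rule out recurrent points in ``fat'' fibers — this is precisely where the structure theory of solenoidal sets enters (a continuous interval map cannot act minimally on a nondegenerate interval, and by Blokh's decomposition the recurrent part of $Q$ equals the minimal set). Once these structural facts are granted, the rigidity of the addresses makes the construction routine and the two inclusions close up as $S \subseteq s\alpha(x) \cap Q \subseteq S$.
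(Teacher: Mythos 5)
First, a point of reference: the paper does not prove this statement at all — it is quoted from \cite{zakl} — so your attempt can only be measured against the structural facts the paper itself imports (notably Blokh's theorem, Theorem 3) and against what a correct proof must contain. Your part (1) is correct as written: forward invariance of $Orb(I_N)$ blocks all iterated preimages of $x$, passing to the limit pins the whole forward orbit of $y$ inside the finite boundary of $Orb(I_N)$, and $Q$ contains no periodic points since the periods $p_n$ tend to infinity. The inclusion $s\alpha(x)\cap Q\subset S$ in part (2) is also fine once you quote $SA(f)\subset\overline{Rec(f)}$ and justify $\overline{Rec(f)}\cap Q=S$, which indeed follows from Blokh's theorem together with $\overline{Per(f)}=\overline{Rec(f)}$ for interval maps. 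A smaller complaint along the way: $f(Q)=Q$ does not follow from ``nested compacta'' alone; you need each cycle to be strongly invariant, $f\left(Orb(I_n)\right)=Orb(I_n)$, after which the nested nonempty compact sets $f^{-1}(y)\cap Orb(I_n)$ have a common point. With mere forward invariance a point of $Q$ may have no preimage in $Orb(I_0)$ at all, and the statement would in fact fail.

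The genuine gap is in the inclusion $S\subset s\alpha(x)$. Your approximation step needs the level-$n$ fibers through the target $s$ to shrink to a point, and your proposed fix — rule out recurrent points in fat fibers because an interval map cannot act minimally on a nondegenerate interval — attacks the wrong obstruction. A nondegenerate fiber $\bigcap_n I_n^{(b_n)}$ is not invariant: since no positive integer is congruent to $0$ modulo every $p_n$, its forward images lie in pairwise distinct fibers with pairwise disjoint interiors, i.e. it is a \emph{wandering} interval, so minimality says nothing about it. Moreover such fibers with \emph{recurrent endpoints} genuinely occur: Denjoy-type blow-ups of a solenoidal attractor (replace a suitable grand orbit inside $Q$ by intervals) yield continuous interval maps in which the endpoints of the wandering fibers belong to $S$. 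For such a target $s$ your chosen backward times only localize the branch within a set whose diameter stays at least the length of the fat fiber, so $x_{-k}\to s$ does not follow, and no refinement of the address bookkeeping can force it. The repair that works bypasses fiber geometry entirely: once $x$ has a backward orbit branch $\left\{x_j\right\}_{j\le 0}$ contained in $Q$, the set $\alpha\left(\left\{x_j\right\}_{j\le 0}\right)$ is nonempty, closed, forward invariant and contained in $Q$; by Blokh's theorem every $y\in Q$ satisfies $\omega(y)=\Omega'=S$, so this $\alpha$-limit set contains $\omega(y)=S$ for any of its points $y$, giving $S\subset\alpha\left(\left\{x_j\right\}_{j\le 0}\right)\subset s\alpha(x)$ for \emph{every} branch inside $Q$ — recurrent endpoints of fat fibers included.
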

The main result deals with the~behaviour of backward orbit branches outside of a~set of different types of backward limit points. The~inspiration for stating such theorem comes from the~work of Birkhoff, who examined the~forward trajectories outside of a~set of non-wandering points on an~arbitrary compacta $E$.

\begin{theorem}\cite{bir} For an~arbitrary neighbourhood $U$ of $NW\left(f\right)$ there exists an~$n > 0$ such that the~number of iterates $\left\{f^j\left(x\right)\right\}_{j=0}^{\infty},\,\,x \in E$ that are outside of $U$ is not larger than $n.$
 \end{theorem}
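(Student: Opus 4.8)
The plan is to reduce the statement to a compactness argument resting on a single elementary fact about wandering points. Write $K = E \setminus U$, where $E$ denotes the ambient compact space on which $f$ acts. Since $U$ is a neighbourhood of $NW(f)$, the set $K$ is compact and every point of $K$ is wandering. The core of the proof is the claim that each wandering point admits an open neighbourhood that any single forward orbit can meet at most once; granting this, a finite subcover of $K$ produces the uniform bound.

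First I would establish the key lemma. If $z$ is wandering then, negating the definition of a non-wandering point, there is an open neighbourhood $V$ of $z$ with $f^n(V) \cap V = \emptyset$ for every $n \geq 1$. I claim that for any $x$ the forward orbit $\{f^j(x)\}_{j \geq 0}$ contains at most one point of $V$. Indeed, suppose $f^i(x) \in V$ and $f^j(x) \in V$ with $i < j$. Then $f^j(x) = f^{j-i}(f^i(x)) \in f^{j-i}(V)$, so $f^j(x) \in f^{j-i}(V) \cap V$, contradicting $f^{j-i}(V) \cap V = \emptyset$ since $j - i \geq 1$.

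Next I would invoke compactness and count. For each $z \in K$ choose such a wandering neighbourhood $V_z$; the family $\{V_z\}_{z \in K}$ is an open cover of the compact set $K$, so finitely many $V_{z_1}, \ldots, V_{z_m}$ suffice, and I set $n = m$. Now fix any $x$. Whenever $f^j(x) \notin U$ we have $f^j(x) \in K$, hence $f^j(x) \in V_{z_{i(j)}}$ for some index $i(j) \in \{1, \ldots, m\}$. By the lemma each $V_{z_i}$ contains at most one point of the orbit, so the assignment $j \mapsto i(j)$ is injective on the set of indices $j$ with $f^j(x) \notin U$. Consequently there are at most $m = n$ such indices, which is the asserted bound.

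The main obstacle is the \emph{uniformity}: producing one $n$ valid for all starting points simultaneously, rather than a bound depending on $x$. This is exactly what compactness of $K = E \setminus U$ supplies, converting the pointwise ``visited at most once'' property into a single global ceiling equal to the size of the finite subcover, which involves $x$ nowhere. A secondary point deserving care is that the relevant neighbourhood of a wandering point must be disjoint from \emph{all} of its forward iterates, not merely the first; this is precisely the content of the negated non-wandering condition and is what makes the at-most-once argument in the lemma go through.
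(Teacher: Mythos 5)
Your proof is correct. The paper itself gives no proof of this statement --- it is quoted from Birkhoff's book --- but your argument is the classical one, and it coincides in structure with the proof the paper gives for its own main result (Theorem \ref{alpha}): a local lemma bounding the number of visits of an orbit to a suitable neighbourhood of each point of the compact set $E \setminus U$, followed by extraction of a finite subcover, the size of which yields the uniform bound independent of the starting point. The only genuine difference is the source of the local bound. For $NW\left(f\right)$ the negation of the non-wandering condition directly gives a neighbourhood $V$ with $f^n\left(V\right) \cap V = \emptyset$ for all $n \geq 1$, hence met at most once by any forward orbit (your lemma, correctly proved via $f^j\left(x\right) \in f^{j-i}\left(V\right) \cap V$); for $A\left(f\right)$ the paper instead appeals to the proposition characterising points of $\omega\left(f\right)$ by the presence of two orbit points in one-sided neighbourhoods, so its neighbourhoods may contain up to two points of a backward orbit branch and the resulting bound is $2m$ rather than your $m$. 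In short, your route is the natural specialisation of the paper's scheme to the non-wandering setting, and nothing is missing.
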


 \noindent In case that $E$ is a~closed interval even stronger result holds.

 \begin{theorem} \cite{sahr}
 \label{omega}For an~arbitrary neighbourhood $U$ of $\omega\left(f\right)$ there exists an~$n > 0$ such that the~number of iterates $\left\{f^j\left(x\right)\right\}_{j=0}^{\infty},\,\,x \in E$ that are outside of $U$ is not larger than $n.$
 \end{theorem}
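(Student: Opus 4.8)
The plan is to recast the statement as a uniform bound on how often an orbit can visit the compact set $K = E \setminus U$, and to prove that bound by separating the wandering part of $K$, where Birkhoff's theorem already applies, from the non-wandering part, where the order structure of the interval must be exploited. Since the conclusion only gets easier as $U$ grows, I would first reduce to the case where $U$ is a finite union of open intervals, so that $K = E \setminus U$ is a finite union of closed intervals; it is compact and, because $\omega\left(f\right) \subset U$, disjoint from $\omega\left(f\right)$. For a fixed point $x$, the trajectory can meet $K$ only finitely often: if $f^{j_i}\left(x\right) \in K$ for infinitely many $i$, compactness of $K$ would produce a limit point $y \in K$ which, being an accumulation point of the forward orbit, lies in $\omega\left(x\right) \subset \omega\left(f\right) \subset U$, a contradiction. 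Thus the whole content of the theorem is the \emph{uniformity} of the bound, i.e. that $\sup_{x \in E} \#\{ j \ge 0 : f^j\left(x\right) \in K\} < \infty$.

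Next I would split $K$ using Birkhoff's theorem. Fix an auxiliary neighbourhood $V$ of $NW\left(f\right)$. On the compact set $K \setminus V$ every point is wandering, and Birkhoff's theorem bounds by some $N_1$ the number of iterates of any trajectory lying outside $V$; in particular it bounds the number lying in $K \setminus V$. Shrinking $V$, the remaining piece $K \cap \overline{V}$ becomes an arbitrarily small neighbourhood of the non-wandering points of $K$, all of which lie in $NW\left(f\right) \setminus \omega\left(f\right)$ because $K$ avoids $\omega\left(f\right)$. Hence it remains to produce a uniform bound $N_2$ on the number of visits of any trajectory to a small neighbourhood of $NW\left(f\right) \setminus \omega\left(f\right)$; the theorem then holds with $n = N_1 + N_2$.

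This last step is the main obstacle, and it is exactly where the interval structure enters. A small enough such neighbourhood contains no recurrent and no periodic points, since $\overline{Per\left(f\right)} \subset \omega\left(f\right)$; on each fixed-point-free closed subinterval $[a,b]$ the sign of $f\left(x\right) - x$ is constant by the intermediate value theorem, so while a trajectory remains in $[a,b]$ it is strictly monotone and can accumulate only boundedly many \emph{consecutive} visits. The difficulty is that a trajectory may leave $K$, travel through $U$, and re-enter $K$ many times, and the sign argument says nothing about these re-entries. Closing this gap is the crux: one must show that the absence of $\omega$-limit points in $K$ forbids a trajectory from returning to the same component unboundedly often, uniformly in $x$. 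I expect this to require the finer one-dimensional machinery behind $\omega\left(f\right)$ — Sharkovsky's decomposition of $\omega$-limit points into solenoidal, basic and cyclic types, together with the description of $NW\left(f\right) \setminus \omega\left(f\right)$ — to organise the finitely many components of $K$ into a gradient-like pattern through which each orbit can pass only a bounded number of times, after which a Lebesgue-number and compactness argument over the finitely many components yields the single constant $N_2$.
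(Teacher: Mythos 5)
Your reduction to a uniform bound on visits to $K = E \setminus U$ and the Birkhoff step for the wandering part of $K$ are fine, but the proposal has a genuine gap, and it is precisely the one you flag yourself: the uniform bound $N_2$ on visits of an arbitrary trajectory to a small neighbourhood of $NW\left(f\right) \setminus \omega\left(f\right)$ is never established. The sign argument on fixed-point-free intervals only controls \emph{consecutive} visits, as you admit, and the re-entry problem is left as an expectation that Sharkovsky's decomposition will ``organise the components of $K$ into a gradient-like pattern.'' That is a restatement of the difficulty, not a resolution of it. As written, your argument proves only the (easy) pointwise finiteness together with the wandering part; the non-wandering-but-not-$\omega$-limit part, which is the entire content beyond Birkhoff, remains open.

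The missing idea is the local lemma that the paper quotes from \cite{blc} and explicitly identifies as the key ingredient of the proof of Theorem \ref{omega}: a point $c$ lies in $\omega\left(f\right)$ if every open interval with left (resp.\ right) endpoint $c$ contains at least two points of some trajectory. Taking the contrapositive, every $c \in K$ (so $c \notin \omega\left(f\right)$) has a one-sided open interval on each side meeting any single trajectory in at most one point; together these give an open neighbourhood $U_c$ meeting any trajectory in a uniformly bounded number of points (at most two, three if the trajectory passes through $c$ itself, which can happen only once since $c$ is not periodic). Compactness of $K$ then yields a finite subcover $V_1, \ldots, V_m$, and $n = 3m$ works, uniformly in $x$. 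This is the route of \cite{sahr}, and it is the same covering argument the paper uses to prove its own Theorem \ref{alpha} for backward orbit branches. Note that it makes your wandering/non-wandering split unnecessary: the two-point lemma applies to \emph{every} point outside $\omega\left(f\right)$, non-wandering or not, and this is exactly how the one-dimensional structure upgrades Birkhoff's conclusion from $NW\left(f\right)$ to $\omega\left(f\right)$ in a single stroke, rather than through the case analysis of basic, solenoidal and cyclic sets you anticipate.
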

 
 The following statement is an~important part of the~proof of Theorem \ref{omega} as well as one of the~main results.
 \begin{proposition}  \cite{blc} A~point $c \in I$ lies in $\omega\left(f\right)$ if every open interval with left (resp. right) endpoint $c$ contains at least two points of some trajectory.

 \end{proposition}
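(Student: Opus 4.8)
The plan is to translate the hypothesis into a statement about recurrence near $c$ and then to build, by hand, a point whose forward orbit accumulates at $c$. First I would rewrite the assumption in a more usable form: if $(c,c+\varepsilon)$ contains two points $f^m(x),f^n(x)$ of a single trajectory with $m<n$, then putting $y=f^m(x)$ and $k=n-m>0$ gives a point $y\in(c,c+\varepsilon)$ together with an integer $k\ge 1$ such that $f^k(y)\in(c,c+\varepsilon)$. Thus the hypothesis is equivalent to: for every $\varepsilon>0$ there exist $y\in(c,c+\varepsilon)$ and $k\ge 1$ with $f^k(y)\in(c,c+\varepsilon)$. Taking $\varepsilon=1/m$ I obtain a sequence $y_m\to c^{+}$ and integers $k_m\ge1$ with $f^{k_m}(y_m)\in(c,c+1/m)$, so that $f^{k_m}(y_m)\to c^{+}$ as well. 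It suffices to treat the left-endpoint case, the right-endpoint one being symmetric.

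I would then split the argument according to the behaviour of the return times $k_m$. If $\{k_m\}$ is bounded, some value $k$ is attained along a subsequence; along it $y_m\to c$ and $f^{k}(y_m)\to c$, so continuity of $f^{k}$ forces $f^{k}(c)=c$. Hence $c$ is a periodic point, and since $Per(f)\subset\omega(f)$ (indeed $c\in\omega(c)$), we conclude $c\in\omega(f)$. This case is immediate.

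The substance of the proof is the case $k_m\to\infty$, where I would construct directly a point $z$ with $c\in\omega(z)$. The idea is to pull small neighbourhoods of $c$ back along the long orbit segments $y_m,f(y_m),\dots,f^{k_m}(y_m)$, whose endpoints both lie arbitrarily close to $c$ on the right. Concretely I would build nested closed intervals $I_1\supset I_2\supset\cdots$ and times $n_1<n_2<\cdots$ so that $f^{n_j}(I_j)$ is a short interval meeting $(c,c+1/j)$; choosing inside $f^{n_j}(I_j)$ one of the returning points $y_m$ and using continuity of $f^{k_m}$ to select a subinterval that $f^{k_m}$ carries into an even smaller neighbourhood of $c$, I would pull this subinterval back through $f^{n_j}$ (using that $f^{n_j}$ maps $I_j$ onto an interval, so preimages of subintervals contain subintervals) to define $I_{j+1}$ and set $n_{j+1}=n_j+k_m$. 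Any $z\in\bigcap_j I_j$ then satisfies $f^{n_j}(z)\to c$, whence $c\in\omega(z)\subset\omega(f)$.

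The main obstacle is exactly this last construction. A purely topological limit of the orbit segments does not suffice, since a limit of orbit segments need not be a genuine orbit; this is why the connectedness of the interval, through the intermediate value property, is essential in order to realise the prescribed near-$c$ returns by an actual point. The delicate point to control is that each image $f^{n_j}(I_j)$ must continue to meet the right side of $c$, so that it contains one of the returning points $y_m$ and the induction can proceed; keeping the intervals nondegenerate with nonempty intersection while forcing the returns to $c$ is the heart of the matter. I would stress that one cannot shortcut this by locating periodic or recurrent points near $c$: a point $c$ may belong to $\omega(f)$ as an isolated point of a solenoidal maximal $\omega$-limit set, with no recurrent points in a one-sided neighbourhood, so the accumulating orbit genuinely has to be produced by the above pull-back scheme rather than inferred from $\overline{Rec(f)}$.
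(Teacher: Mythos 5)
Your reformulation of the hypothesis and your treatment of the bounded case are both correct: if some return time $k$ occurs along a subsequence, continuity gives $f^{k}(c)=c$ and hence $c\in\omega(c)\subset\omega(f)$. The problem is the case $k_m\to\infty$, which is where the entire content of the proposition lies, and there your construction has a genuine gap that you yourself flag but never close. Your induction requires that at each stage the set $f^{n_j}\left(I_j\right)$ contain one of the returning points $y_m$ (with $m$ large enough that its return lands in $\left(c,c+1/(j+1)\right)$). But the hypothesis only guarantees returning points inside intervals whose left endpoint is exactly $c$; the set $f^{n_j}\left(I_j\right)$ is a compact subinterval of $\left(c,c+1/j\right)$ --- possibly a single point, since $f^{n_j}$ may be constant on $I_j$ --- hence it is bounded away from $c$ and can perfectly well miss every returning point (these may form a sparse sequence accumulating only at $c$). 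You also cannot repair this by arranging $f^{n_j}\left(I_j\right)\supset\left(c,c+\delta_j\right)$: a compact set contained in $\left(c,c+1/j\right)$ cannot contain an interval abutting $c$, and forcing the image to touch or straddle $c$ would require preimages of $c$ in the right places, which nothing in the data provides. Finally, the one point of $f^{n_{j+1}}\left(I_{j+1}\right)$ you do control, namely $f^{k_m}\left(y_m\right)$, is not known to be a returning point itself --- the hypothesis says nothing about the orbit of $y_m$ after time $k_m$ --- so the induction cannot restart from it. In short, what you call ``the heart of the matter'' is precisely the proposition, and your scheme does not supply a mechanism for it.

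The mechanism that the cited source uses is of a different nature: intermediate-value and monotonicity arguments applied to the returning orbits themselves, rather than a pull-back of neighbourhoods. Roughly, one splits according to whether periodic points of $f$ accumulate on $c$ from the right. If they do, then $c\in\overline{Per\left(f\right)}\subset\omega\left(f\right)$ for interval maps. If they do not, then on some $\left(c,c+\delta\right)$ each function $f^{k}-\mathrm{id}$ has no zero, hence constant sign; when that sign is negative for the return time $k_m$ of some $y_m$, the points $f^{jk_m}\left(y_m\right)$ decrease monotonically as long as they remain in $\left(c,c+\delta\right)$, so either they converge to a fixed point of $f^{k_m}$ in $\left[c,c+\delta\right)$, which by assumption can only be $c$ itself --- giving $c\in\omega\left(y_m\right)$ outright --- or the orbit crosses to the left of $c$, and that crossing is what feeds the covering (IVT) argument in the remaining configurations. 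The structural difference from your proposal is that the orbit accumulating at $c$ is found among the orbits of the returning points (or forced via periodic points), not manufactured by nested preimages; as written, your construction in the unbounded case cannot be completed.
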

 \begin{remark} The~result of the~proposition above will not change if we replace trajectory with backward orbit branch.
 \end{remark}

\section{Main Results}
In this section we formally restate Birkhoff's theorem for sets of alpha and special alpha limit points for non-invertible systems. The theorem is valid entirely in case of alpha limit points. Unfortunately, to ensure that it holds even if we take into consideration the set of special alpha limit points we need to add an assumption on the map. The idea of the proof of the following theorem comes from \cite{sahr}. 
\begin{theorem}
\label{alpha}
Let $I$ be a~compact interval, $f:I \rightarrow I$ a~continuous onto mapping. For an~arbitrary neighbourhood $U$ of $A\left(f\right)$ there exists $M \in \mathbb{N}$ such that at most $M$ points of any backward orbit branch $\left\{x_i\right\}_{i \leq 0},\,\, x_i \in I$  lie outside of $U$.
\end{theorem}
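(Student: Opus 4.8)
The plan is to argue by contradiction, mirroring the proof of Theorem~\ref{omega} but with backward orbit branches in place of forward trajectories and $A\left(f\right)$ in place of $\omega\left(f\right)$. Set $K = I \setminus U$; this is a compact subset of $I$ disjoint from $A\left(f\right)$, and since $f$ is onto every point admits genuine (infinite) backward orbit branches. Suppose the conclusion fails: for every $M$ there is a backward orbit branch more than $M$ of whose terms lie in $K$. I would first record two structural facts. First, no single branch can have infinitely many terms in $K$: by compactness such terms would have an accumulation point $y \in K$ along a strictly decreasing sequence of indices, so $y \in \alpha\left(\left\{x_j\right\}_{j \leq 0}\right) \subset s\alpha\left(x_0\right) \subset \alpha\left(x_0\right) \subset A\left(f\right)$, contradicting $K \cap A\left(f\right) = \emptyset$. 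Second, the terms of a branch lying in $K$ are pairwise distinct as points of $I$: a repeated value would be a periodic point, hence an element of $Per\left(f\right) \subset A\left(f\right)$, again impossible in $K$. Thus the failure of the theorem genuinely forces unboundedly many \emph{distinct} terms spread across infinitely many branches, which is exactly why a uniform bound $M$ is the real content.

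The core is an extraction argument producing a point $c \in K$ satisfying the hypothesis of the Proposition (as upgraded by the Remark). Fix a target $N$; taking $N = 5$ will suffice. For each $m$ cover $K$ by finitely many intervals of length $1/m$, say $p\left(m\right)$ of them, and choose a branch having more than $N \cdot p\left(m\right)$ terms in $K$; by pigeonhole some length-$1/m$ interval then contains at least $N$ terms of that single branch. This yields, for every $m$, a cluster of $N$ distinct branch-terms contained in an interval of diameter at most $1/m$. Selecting one point from each cluster and using compactness of $K$, I would pass to a subsequence along which the clusters converge to a single point $c \in K$; since the diameters tend to $0$, all $N$ points of the surviving clusters converge to $c$ as well.

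It remains to feed this into the Proposition, and here lies the main obstacle: the Proposition needs two points of one branch in every \emph{one-sided} neighbourhood $\left(c, c+\varepsilon\right)$ or $\left(c-\varepsilon, c\right)$, whereas compactness a priori leaves the clusters free to straddle $c$. This is precisely where choosing $N \geq 5$ pays off: in each cluster at most one of the $N$ distinct points can equal $c$, so at least two lie strictly on a common side of $c$; passing to a further subsequence along which this side is fixed (say the right), every interval $\left(c, c+\varepsilon\right)$ eventually contains at least two points of some single backward orbit branch. By the Proposition together with the Remark this gives $c \in \omega\left(f\right)$, and since $\omega\left(f\right) \subset A\left(f\right)$ we obtain $c \in A\left(f\right)$, contradicting $c \in K = I \setminus U$. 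Hence the bound $M$ must exist, uniformly over all branches. I expect the one-sided refinement just described to be the only genuinely delicate step, the remaining estimates being routine pigeonhole and compactness.
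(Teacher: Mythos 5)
Your proof is correct, and while it rests on the same key lemma as the paper's --- the Proposition from \cite{blc} upgraded by the Remark to backward orbit branches --- it deploys that lemma in the opposite direction and organizes the compactness differently. The paper argues directly: every $x' \in I \setminus U$ lies outside $\omega\left(f\right)$ (because $\omega\left(f\right) \subset A\left(f\right)$), so by the contrapositive of the Proposition it has an open neighbourhood $U_{x'}$ meeting each backward orbit branch in a uniformly bounded number of points; compactness of $I \setminus U$ yields a finite subcover $V_1, \ldots, V_m$, and the bound is $M = 2m$. You instead assume the bound fails, use pigeonhole to manufacture, for each $m$, a cluster of $N = 5$ distinct points of a single branch inside an interval of diameter $1/m$, extract a limit $c \in I \setminus U$ by sequential compactness, and apply the Proposition in its positive direction to force $c \in \omega\left(f\right) \subset A\left(f\right)$, a contradiction. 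What each buys: the paper's covering argument is shorter, produces an explicit bound in terms of the subcover, and evades the one-sidedness issue automatically, since the contrapositive hands you one interval on each side of $x'$ and these are simply intersected into a two-sided neighbourhood; your argument yields no explicit bound and needs extra bookkeeping, but that bookkeeping makes rigorous two points the paper leaves tacit --- that branch points outside $U$ are pairwise distinct (a repetition would be periodic, hence in $Per\left(f\right) \subset A\left(f\right)$), which is what justifies counting ``points'' of a branch at all, and that two of the clustered points lie on a common side of $c$, which is exactly the one-sided hypothesis the Proposition demands. Two minor remarks: $N = 4$ already suffices (at most one cluster point equals $c$, so three remain and two of them share a side), and the paper's own count is slightly off --- $U_{x'}$ can in fact contain three points of a branch (one on each side plus $x'$ itself), so its bound should read $3m$ rather than $2m$; this is harmless, and your route avoids the issue entirely.
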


\begin{proof} Without the loss of generality we can assume that $x'\in I \setminus U$ (otherwise we simply choose the first point of backward orbit branch outside of $U$). We claim that for every such $x'$ we can find an~open neighbourhood $U_{x'}$ with at most two points of any backward orbit branch (otherwise such point will belong to $\omega\left(f\right)$ and since $\omega\left(f\right) \subset A\left(f\right)$ it will be a~part of $A\left(f\right)$ as well).

\noindent The~union of $U_{x'}$ over all $x'\in I \setminus U$ is an~open cover of $I \setminus U.$ Because $I \setminus U$ is compact we can find its finite subcover. Let us denote this finite subcover by $V_1, V_2, \ldots, V_m.$
 \smallskip

 \noindent Each $V_i,\,\,i \in \left\{1, \ldots, m\right\}$ contains at most two points of any backward orbit branch. Then $2 \cdot m =: M$ is the~maximum number of points of any backward orbit branch that lie outside of $U.$
\end{proof}

\begin{theorem} \cite{zakl} Let $I$ be a~closed interval, $f:I \rightarrow I$ a~continuous onto mapping. If $NW\left(f\right) = I$ then $s\alpha\left(x\right) = \alpha\left(x\right)$ for every $x \in I.$
\end{theorem}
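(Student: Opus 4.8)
The plan is to prove the nontrivial inclusion $\alpha(x) \subseteq s\alpha(x)$; the reverse inclusion $s\alpha(x) \subseteq \alpha(x)$ is already recorded in the chain of inclusions of Section~1 and needs no hypothesis. Fix $y \in \alpha(x)$. By definition there are integers $n_k \nearrow \infty$ and points $p_k$ with $f^{n_k}(p_k) = x$ and $p_k \to y$. The first observation is that each such $p_k$ already produces a finite backward orbit branch of $x$: reading the forward orbit $p_k, f(p_k), \dots, f^{n_k}(p_k)=x$ backwards and setting $z_0 = x$, $z_j = f^{\,n_k-j}(p_k)$ gives $f(z_j)=z_{j-1}$ and $z_{n_k}=p_k$. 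Thus for every $\varepsilon>0$ some consistent finite branch of $x$ reaches within $\varepsilon$ of $y$, albeit at a large depth $n_k$.

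What we must upgrade this to is a single infinite backward orbit branch $\{z_j\}_{j\ge 0}$ of $x$ that re-enters every neighbourhood of $y$ at arbitrarily large depths, for then $y \in \alpha(\{z_j\}_{j\le 0}) \subseteq s\alpha(x)$. I would build it recursively, alternately (i) using $y \in \alpha(x)$ to enter a neighbourhood of $y$ and (ii) extending the branch so as to return to a still smaller neighbourhood of $y$. Since $f$ is onto, every finite branch extends to an infinite one, so the only real content is the return step (ii). This is exactly where the hypothesis $NW(f)=I$ enters: $y$ is non-wandering, so for $U=B(y,\varepsilon)$ there are $z\in U$ and $m\ge 1$ with $f^m(z)\in U$, i.e. some point of $U$ possesses an $m$-fold preimage inside $U$. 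Reversing this finite forward orbit yields a short excursion, a finite branch whose two endpoints both lie in $U$.

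To splice such an excursion onto the branch already reaching a point $w$ near $y$, I must produce a preimage of that specific $w$ inside $B(y,\varepsilon)$, not merely of the non-wandering witness $f^m(z)$. The bridge is the intermediate value theorem: applying $f^m$ to the subinterval $J\subseteq U$ bounded by $y$ and the witness and comparing endpoint values, one locates a point of $U$ mapping exactly onto $w$ under $f^m$, provided $w$ lies in the image $f^m(J)$; controlling which side of $y$ the relevant preimages fall on, using onto-ness of $f$ and, for the bookkeeping of one-sided neighbourhoods, the Proposition together with the Remark (which also certifies $y\in\omega(f)$), is what makes the return step go through. Iterating the return step with a nested sequence $\varepsilon_m \downarrow 0$ and concatenating the excursions then yields one infinite branch of $x$ with points in $B(y,1/m)$ for every $m$, hence with $y$ in its $\alpha$-limit set.

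I expect the main obstacle to be precisely this return step: converting the raw non-wandering recurrence of $y$ (\emph{some} point of $U$ returns to $U$) into a return of the \emph{particular} branch built so far, along a single consistent sequence of preimages. The unbounded depth at which $y$ is approximated rules out the naive shortcut of taking a limit of the finite branches in the product topology on $I^{\mathbb{N}}$, since the near-$y$ coordinates escape to infinity; so the argument must be carried out at the level of genuine branch extensions controlled by the intermediate value theorem rather than by soft compactness alone.
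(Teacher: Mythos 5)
You should know at the outset that the paper itself contains no proof of this statement; it is quoted from \cite{zakl}, so your argument can only be judged on its own terms. Its skeleton is sound: $s\alpha\left(x\right) \subset \alpha\left(x\right)$ is free, the substance is to upgrade the witnesses $p_k \rightarrow y$, $f^{n_k}\left(p_k\right) = x$, to a \emph{single} backward orbit branch of $x$ entering every neighbourhood of $y$ at arbitrarily great depths, and you are right that a soft compactness or diagonal argument cannot produce such a branch. But your proof stops exactly where the theorem begins: the ``return step'' is described, not proved, and you say so yourself. Concretely, the intermediate value theorem gives a preimage of the current endpoint $w$ inside $U = B\left(y,\varepsilon\right)$ only ``provided $w$ lies in the image $f^m\left(J\right)$'', and nothing supplies that proviso: $f^m\left(J\right)$ is an interval containing $f^m\left(z\right) \in U$ and the completely uncontrolled point $f^m\left(y\right)$, so it may lie entirely on the opposite side of $f^m\left(z\right)$ from $w$. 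Non-wandering of $y$ yields \emph{some} pair $z, f^m\left(z\right)$ in $U$; it yields no covering relation forcing the image of an interval near $y$ to stretch across the particular point your branch has reached.

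More decisively, no repair that uses the hypothesis only at the point $y$ can succeed, and that is all your sketch uses ($y$ non-wandering, $y \in \alpha\left(x\right)$, $f$ onto, IVT). If such an argument worked, it would prove the stronger statement that $\alpha\left(x\right) \cap NW\left(f\right) \subset s\alpha\left(x\right)$ for every continuous onto interval map, and that statement is false. Indeed, by the paper's chain of inclusions $s\alpha\left(x\right) \subset SA\left(f\right) \subset \overline{Rec\left(f\right)} \subset NW\left(f\right)$, so every point of $\overline{s\alpha\left(x\right)}$ is non-wandering; also $\overline{s\alpha\left(x\right)} \subset \alpha\left(x\right)$, since $\alpha\left(x\right)$ is closed for onto maps (approximate a limit of $\alpha$-limit points by witnesses at increasing depths and fill the missing depths using surjectivity). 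Hence in any example where $s\alpha\left(x\right)$ fails to be closed, each point of $\overline{s\alpha\left(x\right)} \setminus s\alpha\left(x\right)$ is a non-wandering point lying in $\alpha\left(x\right)$ but not in $s\alpha\left(x\right)$ --- precisely your setting, with your desired conclusion failing. Such interval maps exist (see \cite{zakl}, and compare the paper's closing Remark, where an isolated point of a solenoidal set is an $\omega$-limit point, hence non-wandering, belonging to $A\left(f\right)$ but not to $SA\left(f\right)$). The hypothesis $NW\left(f\right) = I$ must therefore be exploited globally, not just at $y$: for interval maps it forces $\overline{Per\left(f\right)} = I$ (Sharkovsky's theorem that the depth of the centre is at most two), and the proofs in the literature then proceed through the structure of maximal $\omega$-limit sets --- the periodic-orbit result (Theorem 1), basic sets (Theorem 2), and solenoidal sets (Theorem 4) --- to place $y$ in $s\alpha\left(x\right)$ case by case. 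None of that machinery, nor any substitute for it, appears in your sketch, so the gap is genuine.
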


\begin{remark}
 If we assume that the~condition of the~Theorem above is fulfilled, Theorem \ref{alpha} is valid for the~set $SA\left(f\right)$ as well.   
\end{remark}
 
\smallskip

\begin{theorem} Let $I$ be a~compact interval, $f:I \rightarrow I$ a~continuous onto mapping with $Rec\left(f\right) closed$ (i.e. solenoidal set does not contain any isolated points). For an~arbitrary neighbourhood $U$ of $SA\left(f\right)$ there exists $M \in \mathbb{N}$ such that at most $M$ points of any backward orbit branch $\left\{x_i\right\}_{i \leq 0},\,\, x_i \in I$  lie outside of $U$.
\end{theorem}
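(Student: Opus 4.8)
The plan is to follow the scheme of the proof of Theorem~\ref{alpha} almost verbatim, the only genuinely new ingredient being a replacement for the inclusion $\omega\left(f\right) \subset A\left(f\right)$ used there. In that argument the point $x'$ was shown to lie in $\omega\left(f\right)$ via the Proposition and the Remark following it, and membership in $\omega\left(f\right) \subset A\left(f\right)$ produced the contradiction with $x' \notin U \supset A\left(f\right)$. To run the same argument for $SA\left(f\right)$ I would instead need $\omega\left(f\right) \subset SA\left(f\right)$; since the general chain of inclusions already gives $SA\left(f\right) \subset \omega\left(f\right)$, this amounts to proving the equality $SA\left(f\right) = \omega\left(f\right)$ under the standing hypothesis. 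Establishing this equality is the heart of the proof, and it is precisely where the assumption that $Rec\left(f\right)$ is closed (equivalently, that solenoidal sets carry no isolated points) enters.

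First I would reduce $\omega\left(f\right) \subset SA\left(f\right)$ to a statement about the pieces of Sharkovsky's decomposition. Writing $\omega\left(f\right) = \bigcup \tilde{\omega}$ as the union of the maximal $\omega$-limit sets, it suffices to show $\tilde{\omega} \subset SA\left(f\right)$ for each of the three types. A periodic orbit consists of recurrent points, so it sits inside $Rec\left(f\right) \subset SA\left(f\right)$. For a basic set $\omega$, Theorem~\ref{basic} furnishes a point $x \in \omega$ with $s\alpha\left(x\right) \supset \omega$, whence $\omega \subset SA\left(f\right)$. For a solenoidal set I would use the cited result on nested cycles of intervals: with $Q = \bigcap Orb\left(I_n\right)$ and $S = Q \cap Rec\left(f\right)$, any $x \in Q$ satisfies $s\alpha\left(x\right) \supset S$. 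The hypothesis that the solenoidal set has no isolated points means exactly that $\tilde{\omega} = Q = S$, so the whole solenoidal set lies in $SA\left(f\right)$. Assembling the three cases yields $\omega\left(f\right) \subset SA\left(f\right)$, hence $SA\left(f\right) = \omega\left(f\right)$.

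With this equality in hand the remainder is a direct transcription of Theorem~\ref{alpha}. Without loss of generality I take $x' \in I \setminus U$ (otherwise I pass to the first point of the backward orbit branch lying outside $U$). For each such $x'$ I claim there is an open neighbourhood $U_{x'}$ containing at most two points of any backward orbit branch: were this false, every open interval with endpoint $x'$ would contain at least two points of some backward orbit branch, so by the Proposition together with the Remark we would obtain $x' \in \omega\left(f\right) = SA\left(f\right) \subset U$, contradicting $x' \notin U$. The sets $U_{x'}$ cover the compact set $I \setminus U$; extracting a finite subcover $V_1, \ldots, V_m$ and noting that each $V_i$ meets any backward orbit branch in at most two points gives the bound $M := 2m$ on the number of points of a backward orbit branch outside $U$.

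I expect the decomposition step to be the only real obstacle, and within it the solenoidal case: one has to be careful that the hypothesis ``$Rec\left(f\right)$ closed'' is genuinely equivalent to the absence of isolated points in every solenoidal set, and that this forces the set $P$ of isolated points in the description $\tilde{\omega} = Q \cup P$ to be empty, so that the recurrent part $S$ exhausts $Q$. The basic-set and periodic-orbit cases are immediate from the quoted theorems, and once $SA\left(f\right) = \omega\left(f\right)$ is secured the covering argument carries over without change.
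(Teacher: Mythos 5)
Your proposal is correct and takes essentially the same route as the paper: both reduce the theorem to showing $\omega\left(f\right) \subset SA\left(f\right)$ under the standing hypothesis by running through Sharkovsky's three types of maximal $\omega$-limit sets (periodic orbits, basic sets via Theorem \ref{basic}, and solenoidal sets, which the hypothesis forces to coincide with their recurrent, minimal part), and then invoke the Birkhoff-type bound for neighbourhoods of $\omega\left(f\right)$. The only cosmetic deviations are that you handle the solenoidal case through the nested-cycles theorem --- where the inclusion $\tilde{\omega} \subset S \subset s\alpha\left(x\right)$ is all you actually need, since $Q = \bigcap Orb\left(I_n\right)$ may properly contain $\tilde{\omega}$, so your claimed equality $\tilde{\omega} = Q$ is stronger than what holds or is required --- whereas the paper argues directly from minimality of $f$ restricted to the perfect part, and that you re-run the covering argument of Theorem \ref{alpha} explicitly rather than citing Theorem \ref{omega}, which is arguably the more careful choice given that Theorem \ref{omega} is stated for forward trajectories.
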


\begin{proof} The~Theorem holds for union of $\omega$-limit sets (Theorem \ref{omega}). From this we can conclude that only the~points that are a~part of some $\omega\left(x\right)$ but are not included in any $s\alpha\left(x\right)$ can be problematic.

 \noindent Let us assume that there is a~point $z$ such that $z \not \in s\alpha\left(x\right)$ for any $x \in I$ however it is an~omega limit point. It has to be a~part of some maximal omega limit set $\Tilde{\omega}.$ We will focus on each type of $\Tilde{\omega}$ separately.

\noindent The~easiest is the~case of periodic orbit, because then $z$ has to belong to its own $s\alpha\left(z\right)$ and as such is a~part of $SA\left(f\right).$

\noindent In another case scenario, when $\Tilde{\omega}$ is a~basic set, Theorem \ref{basic} can be applied. From this we can conclude that $z$ is in fact a~part of $SA\left(f\right).$

\noindent This leaves us with with the~last type of $\Tilde{\omega}$ namely solenoidal set. According to our assumption we are dealing with minimal system (the~restriction of $f$ to the~perfect set $Q$). In such settings every point $z$ is a~part of its own $s\alpha\left(z\right)$, meaning that every point of $\tilde{\omega}$ is a~part of $SA\left(f\right)$ as well.
\end{proof}

\begin{remark}
 If we take into consideration the~solenoidal sets that have nonempty $P$ the~situation becomes more complicated. The isolated point $y \in P$ has to be a~part of $A\left(f\right)$ and as such is a~limit point of a~subsequence of backward trajectory. It is not possible to find a~neighbourhood of $y$ that includes only a~finite amount of elements of any backward orbit branch, which makes the~theorem not valid.   
\end{remark}

\section{Acknowledgements}

I would like to express sincere gratitude to my supervisor Michal Málek for his insightful comments and suggestions, patience and invaluable assistance at every stage of the research project. Research was funded by institutional support for the development of research organisations (IČ 47813059) by Grant SGS 16/2024 and by Support for Science and Research in the Moravian-Silesian Region 2023 (project RRC/09/2023). 

\bibliographystyle{plain}
\bibliography{resources}

\end{document}